\documentclass[12pt]{amsart}
\usepackage{amscd,amsmath,amsthm,amssymb}
\usepackage{pstcol,pst-plot,pst-3d}
\usepackage{lmodern,pst-node}
\usepackage{multicol}
 \usepackage[dvips]{graphicx}
 \usepackage{epstopdf}
 \usepackage{graphicx}
\usepackage{amsfonts,amssymb,amscd,amsmath,enumerate,verbatim}
\psset{unit=0.7cm,linewidth=0.8pt,arrowsize=2.5pt 4}

\newpsstyle{fatline}{linewidth=1.5pt}
\newpsstyle{fyp}{fillstyle=solid,fillcolor=verylight}
\definecolor{verylight}{gray}{0.97}
\definecolor{light}{gray}{0.9}
\definecolor{medium}{gray}{0.85}



\unitlength=0.7cm

%
%
\def\NZQ{\mathbb}               
\def\NN{{\NZQ N}}

\def\ZZ{{\NZQ Z}}

%
%
\def\frk{\mathfrak}               

\def\Phi{{\frk N}}
\def\Pc{{\mathcal P}}

%
%


%
\def\opn#1#2{\def#1{\operatorname{#2}}} 
%
\opn\chara{char} \opn\length{\ell} \opn\pd{pd} \opn\rk{rk}
\opn\projdim{proj\,dim} \opn\injdim{inj\,dim} \opn\rank{rank}
\opn\depth{depth} \opn\grade{grade} \opn\height{height}
\opn\size{size}
\opn\embdim{emb\,dim} \opn\codim{codim}

\opn\Tr{Tr} \opn\bigrank{big\,rank}
\opn\superheight{superheight}\opn\lcm{lcm}
\opn\trdeg{tr\,deg}
\opn\reg{reg} \opn\lreg{lreg} \opn\ini{in} \opn\lpd{lpd}
\opn\size{size}\opn{\mult}{mult}
\opn{\Cl}{Cl}\opn{\trdeg}{trdeg}
%
\opn\div{div} \opn\Div{Div} \opn\cl{cl} \opn\Cl{Cl}
%
%
\opn\Spec{Spec} \opn\Supp{Supp} \opn\supp{supp} \opn\Sing{Sing}
\opn\Ass{Ass} \opn\Min{Min} \opn\cl{cl}
%
%
\opn\Ann{Ann} \opn\Rad{Rad} \opn\Soc{Soc}
%
%
\opn\Syz{Syz} \opn\Im{Im} \opn\Ker{Ker} \opn\Coker{Coker}
\opn\Am{Am} \opn\Hom{Hom} \opn\Tor{Tor} \opn\Ext{Ext}
\opn\End{End} \opn\Aut{Aut} \opn\id{id} \opn\ini{in}

\opn\nat{nat}
\opn\pff{pf}
\opn\Pf{Pf} \opn\GL{GL} \opn\SL{SL} \opn\mod{mod} \opn\ord{ord}
\opn\Gin{Gin}
\opn\Hilb{Hilb}\opn\adeg{adeg}\opn\std{std}\opn\ip{infpt}
\opn\Pol{Pol}
\opn\sat{sat}
\opn\Var{Var}
\opn\Gen{Gen}
\opn\lex{lex}
\opn\div{div}

%
%
\opn\aff{aff} \opn\con{conv} \opn\relint{relint} \opn\st{st}
\opn\lk{lk} \opn\cn{cn} \opn\core{core} \opn\vol{vol}
\opn\link{link} \opn\star{star}
\opn\gr{gr}


\def\Ic{{\mathcal I}}

\def\Cc{{\mathcal C}}
\def\Dc{{\mathcal D}}

%
%

\def\pot#1#2{#1[\kern-0.28ex[#2]\kern-0.28ex]}

%
%
\opn\dirlim{\underrightarrow{\lim}}
\opn\inivlim{\underleftarrow{\lim}}
%
%
%

\let\sect=\cap

\let\Dirsum=\bigoplus

%
%
\let\to=\rightarrow

\def\Implies{\ifmmode\Longrightarrow \else
        \unskip${}\Longrightarrow{}$\ignorespaces\fi}
\def\implies{\ifmmode\Rightarrow \else
        \unskip${}\Rightarrow{}$\ignorespaces\fi}
\def\iff{\ifmmode\Longleftrightarrow \else
        \unskip${}\Longleftrightarrow{}$\ignorespaces\fi}

\let\:=\colon
\newtheorem{Theorem}{Theorem}[section]
\newtheorem{Lemma}[Theorem]{Lemma}
\newtheorem{Corollary}[Theorem]{Corollary}
\newtheorem{Proposition}[Theorem]{Proposition}

%
%
\let\epsilon\varepsilon
\let\phi=\varphi
\let\kappa=\varkappa
%
%
\textwidth=15cm \textheight=22cm \topmargin=0.5cm
\oddsidemargin=0.5cm \evensidemargin=0.5cm \pagestyle{plain}
%

\def\qed{\ifhmode\textqed\fi
      \ifmmode\ifinner\quad\qedsymbol\else\dispqed\fi\fi}
\def\textqed{\unskip\nobreak\penalty50
       \hskip2em\hbox{}\nobreak\hfil\qedsymbol
       \parfillskip=0pt \finalhyphendemerits=0}
\def\dispqed{\rlap{\qquad\qedsymbol}}

%
\opn\dis{dis}
\def\pnt{{\raise0.5mm\hbox{\large\bf.}}}

\opn\Lex{Lex}
\opn\int{int}




\newcommand{\inD}[1][\relax]{\def\argone{#1}\def\temprelax{\relax}
  \ifx\argone\temprelax\right.\else\,\middle|#1\right.{}\fi}

 

\begin{document}
\title {Gr\"obner Bases of balanced polyominoes}
\author {J\"urgen Herzog, Ayesha Asloob Qureshi and Akihiro Shikama}
\thanks{This paper was partially written during the visit of the  second and third author at Universit\"at Duisburg-Essen, Campus Essen. The second  author wants to thank the Abdus Salam International Centre for Theoretical Physics (ICTP), Trieste, Italy  for  supporting her. The third author  wants to thank Professor Hibi who made his visit to Essen possible.
}

\subjclass{13C05, 05E40, 13P10.}
\keywords{polyominoes, Gr\"obner bases, lattice ideals}

\address{J\"urgen Herzog, Fachbereich Mathematik, Universit\"at Duisburg-Essen, Campus Essen, 45117
Essen, Germany} \email{juergen.herzog@uni-essen.de}

\address{Ayesha Asloob Qureshi, The Abdus Salam International Center of Theoretical Physics, Trieste, Italy}
\email{ayesqi@gmail.com}
\address{Akihiro Shikama, Department of Pure and Applied Mathematics, Graduate School of Information Science and Technology,
Osaka University, Toyonaka, Osaka 560-0043, Japan}
\email{a-shikama@cr.math.sci.osaka-u.ac.jp}

\begin{abstract}
We introduce balanced  polyominoes and show that their ideal of inner minors is a prime ideal and has a squarefree Gr\"obner basis with respect to any monomial order, and we  show that any row or column convex and any tree-like polyomino is simple and balanced.
\end{abstract}
\maketitle

\section*{Introduction}
Polyominoes are, roughly speaking,  plane figures obtained by joining squares of equal size edge to edge. Their appearance origins in  recreational mathematics but also has been subject of many combinatorial investigations including tiling problems. A connection of polyominoes to commutative algebra has first been established by the second author of this paper by assigning to each polyomino its ideal of inner minors, see \cite{Q}. This class of ideals widely generalizes the ideal of $2$-minors of a matrix of indeterminates, and even that of the  ideal of 2-minors of two-sided ladders. It also includes the meet-join ideal of plane distributive lattices. Those classical ideals have been extensively studied in the literature, see for example \cite{BV}, \cite{C}   and \cite{HT}. Typically one determines for such ideals their Gr\"obner bases, determines their resolution and computes their regularity, checks whether the rings defined by them are normal, Cohen-Macaulay or Gorenstein. A first step in this direction for the inner minors of a polyomino (also called polyomino ideals) has been done by Qureshi in the afore mentioned paper. Very recently those  convex polyominoes have been classified in \cite{EHH} whose ideal of inner minors is linearly related or has a linear resolution. For  some  special polyominoes  also  the regularity of the ideal of inner minors is known, see \cite{ERQ}.

In this paper, for balanced polyominoes, we provide some positive answers to the questions addressed before. To define a balanced polyomino, one labels the  vertices of a polyomino by integer numbers in a way that row and column sums are zero along intervals that belong to the polyomino. Such a labeling is called admissible. There is a natural way to attach to each admissible labeling of a polyomino $\Pc$ a binomial. The given polyomino is called balanced if all the binomials arising from admissible labelings belong to the ideal of inner minors $I_\Pc$ of $\Pc$. It turns out that $\Pc$ is balanced  if and only if $I_\Pc$  coincides with the lattice ideal determined by $\Pc$, see Proposition~\ref{ilambda}. This is the main observation of Section~\ref{innerminors} where we provide the basic definitions regarding polyominoes and their ideals of inner minors. An important consequence of Proposition~\ref{ilambda} is the result stated in Corollary~\ref{primep} which asserts that for any balanced polyomino $\Pc$, the ideal $I_\Pc$ is a prime ideal and that its  height coincides with the number of cells of $\Pc$. It is conjectured in \cite{Q} by the second author of this paper that $I_\Pc$ is a prime ideal for any simple polyomino $\Pc$. A polyomino is called simple if it has no `holes', see Section~\ref{innerminors} for the precise definition. We expect that simple polyominoes are balanced. This would then imply  Qureshi's conjecture.

In Section 2 we identify the primitive binomials of a balanced polyomino (Theorem~\ref{primitive}) and deduce from this that for a balanced polyomino $\Pc$ the ideal $I_\Pc$ has a squarefree initial ideal for any monomial order. This then implies, as shown in Corollary~\ref{balancedcm}, that the residue class ring of $I_\Pc$ is a normal Cohen-Macaulay domain. Finally, in Section~\ref{classes} we show that all row or column convex, and all tree-like polyminoes are  simple and  balanced. This supports our conjecture that simple polyominoes are balanced.

\section{The ideal of inner minors of a polyomino}
\label{innerminors}
In this section we introduce polyominoes and their ideals of inner minors. Given $a=(i,j)$ and $b=(k,l)$ in $\NN^2$ we write  $a\leq b$ if $i\leq k$ and $j\leq l$. The set $[a,b]=\{c\in\NN^2\:\; a\leq c\leq b\}$ is called an {\em interval}, and an interval of the from $C=[a,a+(1,1)]$ is called a {\em cell} (with left lower corner $a$). The elements of $C$ are called the {\em vertices} of $C$, and the sets $\{a,a+(1,0)\}, \{a,a+(0,1)\},  \{a+(1,0),  a+(1,1)\}$ and   $\{a+(0,1),  a+(1,1)\}$ the {\em edges} of $C$.


Let $\Pc$ be a finite collection of cells of $\NN^2$, and let $C$ and $D$ be two cells of $\Pc$. Then $C$ and $D$ are said to be {\em connected}, if there is a sequence of cells $C= C_1, \ldots, C_m =D$ of $\Pc$  such that $C_i \cap C_{i+1}$ is an edge of $C_i$ for $i=1, \ldots, m-1$. If in addition, $C_i \neq C_j$ for all $i \neq j$, then $\mathcal{C}$ is called a {\em path} (connecting $C$ and $D$). The collection of cells $\Pc$ is called a {\em polyomino} if any two cells of
 $\Pc$ are connected, see Figure~\ref{polyomino}.

 \begin{figure}[htbp]
\begin{center}
\includegraphics[width =3cm]{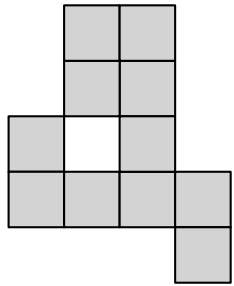}
\end{center}
\caption{A polyomino}\label{polyomino}
\end{figure}

Let $\Pc$ be a polyomino, and let $K$ be a field. We denote by $S$ the polynomial over $K$ with variables $x_{ij}$ with $(i,j)\in V(\Pc)$.  Following \cite{Q} a $2$-minor $x_{ij}x_{kl}-x_{il}x_{kj}\in S$ is called an {\em inner minor} of $\Pc$ if all the cells $[(r,s),(r+1,s+1)]$ with $i\leq r\leq k-1$ and $j\leq s\leq l-1$ belong to $\Pc$. In that case the interval $[(i,j),(k,l)]$ is called an {\em inner interval} of $\Pc$. The ideal $I_\Pc\subset S$ generated by all inner minors of $\Pc$ is called the {\em polyomino ideal} of $\Pc$. We also set $K[\Pc]=S/I_\Pc$.

Let $\Pc$ be a polyomino. An interval $[a,b]$ with $a=(i,j)$ and $b=(k,l)$ is called a {\em horizontal  edge interval} of $\Pc$  if $j=l$ and  the sets $\{r,r+1\}$ for $r=i,\ldots,k-1$ are edges of cells of $\Pc$. Similarly one defines vertical edge intervals of $\Pc$.  According to \cite{Q},  an integer value function $\alpha\: V(\Pc) \to \ZZ$ is called {\em  admissible}, if for all maximal horizontal or vertical edge intervals $\Ic$ of $\Pc$ one has
\[
\sum_{a\in \Ic}\alpha(a)=0.
\]

 \begin{figure}[htbp]
\begin{center}
\includegraphics[width =3.3cm]{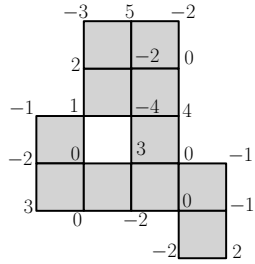}
\end{center}
\caption{An admissible labeling}\label{labeling}
\end{figure}

In Figure~\ref{labeling} an admissible labeling of the polyomino displayed Figure~\ref{polyomino} is shown. Given an admissible labeling $\alpha$ we define the binomial \[
f_{\alpha} = \prod_{a \in V(\Pc) \atop \alpha(a) > 0} x_a^{\alpha(a)} - \prod_{a \in V(\Pc) \atop \alpha(a) < 0} x_a^{-\alpha(a)} ,
 \]
Let $J_\Pc$ be the ideal generated by the binomials $f_\alpha$ where $\alpha$ is an admissible labeling of $\Pc$. It is obvious that $I_\Pc\subset J_\Pc$.
We call a polyomino {\em balanced} if for any admissible labeling $\alpha$, the binomial $f_{\alpha} \in I_{\Pc}$. This is the case if and only if $I_\Pc=J_\Pc$.

Consider the free abelian group $G=\Dirsum_{(i,j)\in V(\Pc)}\ZZ e_{ij}$ with basis elements $e_{ij}$. To any cell $C=[(i,j),(i+1,j+1)]$  of $\Pc$ we attach the element $b_C=e_{ij}+e_{i+1,j+1}- e_{i+1,j}-e_{i,j+1}$ in $G$ and let $\Lambda\subset G$ be the lattice spanned by these elements.

\begin{Lemma}
\label{lattice}
The elements $b_C$ form a $K$-basis of $\Lambda$ and hence $\rank_\ZZ \Lambda=|\Pc|$. Moreover,  $\Lambda$ is saturated. In other words, $G/\Lambda$ is torsionfree.
\end{Lemma}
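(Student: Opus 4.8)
The plan is to exhibit an explicit square submatrix of the coefficient matrix of the family $\{b_C\}$ that is triangular with $\pm 1$ on the diagonal; this single observation yields all three assertions at once. Write each cell as $C=[a_C,a_C+(1,1)]$, so that the assignment $C\mapsto a_C$ (the lower left corner) is an injection from the set of cells of $\Pc$ into $V(\Pc)$. Let $B$ be the integer matrix whose rows are indexed by the cells of $\Pc$, whose columns are indexed by $V(\Pc)$, and whose $(C,v)$ entry is the coefficient of $e_v$ in $b_C$; thus $\Lambda$ is the $\ZZ$-span of the rows of $B$. I would fix a total order $\prec$ on $V(\Pc)$ refining the coordinatewise order (for instance the lexicographic order) and consider the square submatrix $B_0$ of $B$ obtained by keeping only the columns indexed by the lower left corners $a_C$.

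The key point is that $a_C$ is the $\prec$-smallest among the four corners $a_C,\ a_C+(1,0),\ a_C+(0,1),\ a_C+(1,1)$ of $C$, which are exactly the vertices at which the row $b_C$ has a nonzero entry. Hence the $(C,a_{C'})$ entry of $B_0$ can be nonzero only when $a_{C'}$ is a corner of $C$, that is, only when $a_C\preceq a_{C'}$, while the diagonal entry $(C,a_C)$ equals $+1$. Ordering rows and columns of $B_0$ compatibly by $\prec$ therefore makes $B_0$ upper triangular with all diagonal entries equal to $1$, so $\det B_0=1$. In particular the rows $b_C$ are linearly independent over $\QQ$, which proves that they form a $\ZZ$-basis of the lattice $\Lambda$ they generate and that $\rank_\ZZ\Lambda=|\Pc|$.

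For saturation I would invoke the Smith normal form: for an integer matrix of rank $r=|\Pc|$, the lattice spanned by its rows is saturated in $G$ if and only if the greatest common divisor of its maximal ($r\times r$) minors equals $1$, equivalently if and only if all its elementary divisors are $1$. Since $\det B_0=1$ is one such maximal minor, this gcd is $1$, and hence $G/\Lambda$ is torsion free. Alternatively one can bypass the Smith normal form and argue directly: if $ng=\sum_C\lambda_C b_C$ with $g\in G$ and $n\geq 1$, then reading off the coefficient of $e_{a_C}$ and processing the cells in increasing $\prec$-order shows by induction that $n\mid\lambda_C$ for every $C$, whence $g=\sum_C(\lambda_C/n)b_C\in\Lambda$.

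The only thing that really needs verifying is the triangularity claim, namely that distinct cells have distinct lower left corners and that $a_C$ is the $\prec$-minimal corner of $C$; once this elementary combinatorial observation is in place, the unimodularity of $B_0$ is automatic and both the basis/rank statement and the saturation statement follow formally. I do not expect a serious obstacle here, as the entire content of the proof lies in choosing the lower left corners as pivot columns.
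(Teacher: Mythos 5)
Your proof is correct and follows essentially the same route as the paper's: both arguments order the vertices lexicographically (or by any refinement of the coordinatewise order) and exploit that the lower left corner $a_C$ is the smallest vertex of $C$, so that the $b_C$ have a triangular coefficient pattern with $1$'s in the pivot positions. The only cosmetic difference is in how saturation is extracted from this --- the paper completes $\{b_C\}$ to a basis of $G$ by adjoining the $e_{ij}$ at non--lower-left-corner vertices, whereas you invoke the unimodular-minor (Smith normal form) criterion or the direct divisibility induction --- but all of these rest on the identical observation that the distinguished square submatrix has determinant $1$.
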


\begin{proof}
 We order the basis elements $e_{ij}$ lexicographically. Then the lead term of $b_C$ is $e_{ij}$. This shows that the elements $b_C$ are linearly independent and hence form a $\ZZ$-basis of $\Lambda$. We may complete this basis of $\Lambda$ by the elements $e_{ij}$ for which $(i,j)$ is not a left lower corner  of a cell of $\Pc$ to obtain a basis of $G$. This shows that $G/\Lambda$ is free, and hence torsionfree.
\end{proof}

The lattice ideal $I_\Lambda$ attached to the lattice $\Lambda$ is the ideal generated by all binomials
\[
f_v= \prod_{a \in V(\Pc) \atop v_a > 0} x_a^{v_a} - \prod_{a \in V(\Pc) \atop v_a < 0} x_a^{-v_a}
\]
with $v\in \Lambda$.

\begin{Proposition}
\label{ilambda}
Let $\Pc$ be a balanced polyomino. Then $I_\Pc= I_\Lambda$.
\end{Proposition}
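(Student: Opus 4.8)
The plan is to prove the equality by squeezing $I_\Lambda$ between $I_\Pc$ and $J_\Pc$, i.e.\ to establish the chain
\[
I_\Pc\subseteq I_\Lambda\subseteq J_\Pc,
\]
and then to collapse it using the standing hypothesis that $\Pc$ is balanced, which by definition means $I_\Pc=J_\Pc$. Once the two outer terms agree, every inclusion in the chain becomes an equality, and in particular $I_\Pc=I_\Lambda$. Neither of the two inclusions uses balancedness; that hypothesis enters only at the final step.

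For the first inclusion I would show that every inner minor already lies in $I_\Lambda$. The inner minor attached to an inner interval $[(i,j),(k,l)]$ is the binomial whose exponent vector is $w=e_{ij}+e_{kl}-e_{il}-e_{kj}$, and a telescoping computation gives
\[
w=\sum_{r=i}^{k-1}\sum_{s=j}^{l-1} b_{C_{rs}},\qquad C_{rs}=[(r,s),(r+1,s+1)].
\]
Since $[(i,j),(k,l)]$ is an inner interval, every cell $C_{rs}$ belongs to $\Pc$, so $w\in\Lambda$ and the inner minor equals $f_w\in I_\Lambda$. As the inner minors generate $I_\Pc$, this yields $I_\Pc\subseteq I_\Lambda$.

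For the second inclusion I would observe that each generator $b_C$ of $\Lambda$ is itself an admissible labeling. Its only nonzero values are $\pm1$ at the four corners of $C$, and any maximal horizontal or vertical edge interval that meets $C$ contains precisely the two corners of $C$ lying on a common edge, which carry opposite signs; all other maximal edge intervals meet $C$ in no edge and see only zeros. Hence every interval sum vanishes, so $b_C$ is admissible, and therefore the whole lattice $\Lambda$ is contained in the group of admissible labelings. Consequently, for every $v\in\Lambda$ the binomial $f_v$ coincides with $f_\alpha$ for the admissible labeling $\alpha=v$, so each generator of $I_\Lambda$ is a generator of $J_\Pc$; thus $I_\Lambda\subseteq J_\Pc$.

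The point worth flagging is that one should \emph{not} attempt to prove the more symmetric-looking statement $I_\Lambda=J_\Pc$ directly, because it is false in general: the lattice $\Lambda$ can be strictly smaller than the full group of admissible labelings. For a polyomino with a hole the admissibility conditions may fail to register the missing cell, producing an admissible labeling that is not an integer combination of the $b_C$, so that $I_\Lambda\subsetneq J_\Pc$. It is exactly the balancedness hypothesis $I_\Pc=J_\Pc$ that forces the sandwich $I_\Pc\subseteq I_\Lambda\subseteq J_\Pc=I_\Pc$ to collapse to a single ideal, and recognizing that this—rather than a direct comparison of $I_\Lambda$ and $J_\Pc$—is the right route is the only real obstacle in the argument.
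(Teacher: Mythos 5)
Your proof is correct and follows essentially the same route as the paper: the key step in both is that every $v\in\Lambda$ is an admissible labeling (the paper writes $v=\sum_C z_C b_C$ and takes $\alpha=\sum_C z_C\alpha_C$ with $\alpha_C$ the $\pm 1$ labeling of the corners of $C$), so that $I_\Lambda\subseteq J_\Pc=I_\Pc$ by balancedness. The only difference is that you make the easy inclusion $I_\Pc\subseteq I_\Lambda$ explicit via the telescoping sum over the cells of an inner interval, which the paper leaves implicit.
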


\begin{proof}
The assertion  follows once we have shown that for any $v\in \Lambda$ there exists an admissible labeling $\alpha$ of $\Pc$ such that $v_a=\alpha(a)$ for all $a\in V(\Pc)$. Indeed, since the elements $b_C\in \Lambda$ form a $\ZZ$-basis  of $\Lambda$,  there exist integers $z_C\in \ZZ$ such that $v=\sum_Cz_Cb_C$. We set $\alpha=\sum_{C\in \Pc}z_C\alpha_C$ where for $C=[(i,j),(i+1,j+1)]$,
\[
\alpha_C((k,l))= \left\{ \begin{array}{ll}
          1, &  \text{if  $(k,l)=(i,j)$ or $(k,l)=(i+1,j+1)$}, \\
          -1, &\text{if  $(k,l)=(i+1,j)$ or $(k,l)=(i,j+1)$}, \\
           0,& \text{otherwise}.
        \end{array} \right.
\]
Then $\alpha(a)=v_a$ for all $a\in V(\Pc)$. Since each $\alpha_C$ is an admissible labeling of $\Pc$ and since any linear combination of admissible labelings is  again   an admissible labeling, the desired result follows.
\end{proof}

\begin{Corollary}
\label{primep}
If $\Pc$ is a balanced polyomino, then $I_\Pc$ is a prime ideal of height $|\Pc|$.
\end{Corollary}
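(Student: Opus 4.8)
The plan is to read off both assertions from Proposition~\ref{ilambda} and Lemma~\ref{lattice} together with the standard dictionary for lattice ideals. Since $\Pc$ is balanced, Proposition~\ref{ilambda} gives $I_\Pc=I_\Lambda$, so it suffices to establish that the lattice ideal $I_\Lambda$ is prime of height $|\Pc|$.

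For primeness I would invoke the classical criterion that a lattice ideal $I_\Lambda\subset S$ is prime precisely when the lattice $\Lambda$ is saturated, that is, when $G/\Lambda$ is torsionfree (in this situation $I_\Lambda$ is the toric ideal of the affine semigroup determined by $\Lambda$). Lemma~\ref{lattice} asserts exactly that $G/\Lambda$ is torsionfree, so the criterion applies and $I_\Pc=I_\Lambda$ is prime.

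For the height I would use that the height of a lattice ideal equals the rank of its defining lattice: writing $n=|V(\Pc)|$, the quotient $S/I_\Lambda$ has Krull dimension $n-\rank_\ZZ\Lambda$, whence $\height I_\Pc=\height I_\Lambda=\rank_\ZZ\Lambda$. By Lemma~\ref{lattice} this rank equals $|\Pc|$, which yields $\height I_\Pc=|\Pc|$.

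The only substantive inputs are the two already-proved results cited above; the remaining steps are purely the standard theory of lattice (toric) ideals, so I do not anticipate a real obstacle beyond quoting the correct classical statements. If anything is delicate, it is merely making sure the saturation hypothesis is used in the precise form required by the primeness criterion, but Lemma~\ref{lattice} delivers it verbatim.
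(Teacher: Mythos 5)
Your proposal matches the paper's proof essentially verbatim: both reduce to $I_\Pc=I_\Lambda$ via Proposition~\ref{ilambda}, obtain primeness from the saturation of $\Lambda$ established in Lemma~\ref{lattice} (the paper cites \cite[Theorem 7.4]{MS} for this criterion), and get the height from the identity $\height I_\Lambda=\rank_\ZZ\Lambda$ (the paper cites \cite[Corollary 2.2]{ES} or \cite[Proposition 7.5]{MS}). No gaps; this is the same argument.
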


\begin{proof}
By Proposition~\ref{ilambda}, $I_\Pc=I_\Lambda$ and by Lemma~\ref{lattice}, $\Lambda$ is saturated. It follows that $I_\Pc$ is a prime ideal, see  \cite[Theorem 7.4]{MS}. Next if follows from \cite[Corollary 2.2]{ES} (or \cite[Proposition 7.5]{MS})  that $\height I_\Pc=\rank_\ZZ \Lambda$. Hence the desired conclusion follows from Lemma~\ref{lattice}.
\end{proof}

Let $\Pc$ be a polyomino and let $[a,b]$ an interval with the property that $\Pc\subset [a,b]$.  According to \cite{Q}, a polyomino $\Pc$  is called {\em simple}, if for any cell $C$ not belonging to $\Pc$ there exists a path $C=C_1,C_2,\ldots,C_m=D$ with  $C_i\not \in \Pc$ for $i=1,\ldots,m$ and such that $D$ is not a cell of  $[a, b]$. It is conjectured in \cite{Q} that $I_\Pc$ is a prime ideal if $\Pc$ is simple. There exist  examples of polyominoes for which $I_\Pc$ is a prime ideal but which are not simple. Such an example is shown in Figure~\ref{prime1}. On the other hand, we conjecture that a polyomino is simple if and only it is balanced. This conjecture implies Qureshi's conjecture on simple polyominoes.

 \begin{figure}[htbp]
\begin{center}
\includegraphics[width =2cm]{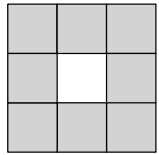}
\end{center}
\caption{Not simple but prime}\label{prime1}
\end{figure}

\section{Primitive binomials of balanced polyominoes}
\label{primitivesection}
The purpose of this section is to identify for any balanced polyomino $\Pc$ the  primitive binomials in $I_\Pc$. This will allow us to show that the initial ideal of $I_\Pc$  is a squarefree monomial ideal for any monomial order.

The primitive binomials in $\Pc$ are determined by cycles. A sequence of  vertices $\Cc= a_1,a_2, \ldots, a_m$ in  $V(\Pc)$ with $a_m = a_1$ and such that $a_i \neq a_j$ for all  $1 \leq i < j \leq m-1$ is a called a {\em cycle}  in $\Pc$ if the following conditions hold:
\begin{enumerate}
\item[(i)]  $[a_i, a_{i+1}] $ is a horizonal or vertical edge interval of $\Pc$ for all $i= 1, \ldots, m-1$;
\item[(ii)] for $i=1, \ldots, m$ one has: if $[a_i, a_{i+1}]$ is a horizonal interval of $\Pc$, then  $[a_{i+1}, a_{i+2}]$  is a vertical edge interval of $\Pc$ and vice versa. Here,  $a_{m+1} = a_2$.
\end{enumerate}


\begin{figure}[htbp]
  \begin{center}
    \begin{tabular}{c}

      \begin{minipage}{0.5\hsize}
        \begin{center}
          \includegraphics[clip, width=3cm]{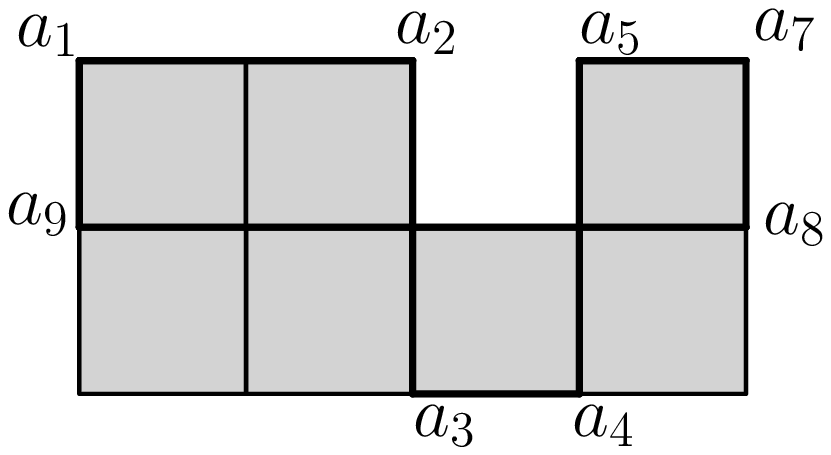}
        \end{center}
      \end{minipage}

      \begin{minipage}{0.5\hsize}
        \begin{center}
          \includegraphics[clip, width=3cm]{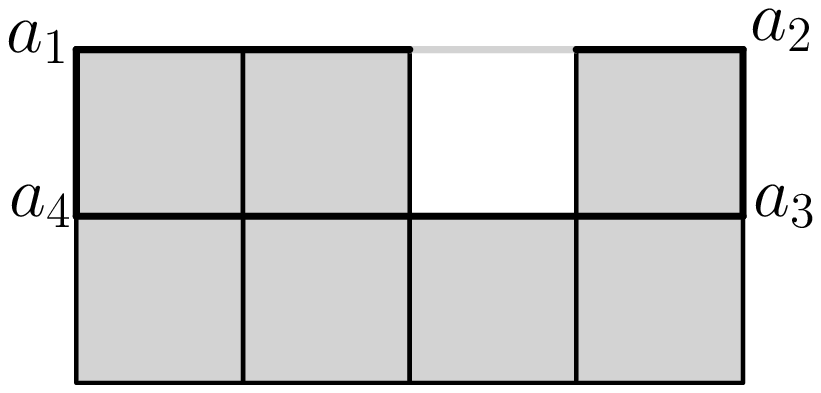}
        \end{center}
      \end{minipage}

    \end{tabular}
    \caption{A cycle and a non-cycle in $\Pc$}
    \label{figlena}
  \end{center}

\end{figure}

It follows immediately from the definition of a cycle that $m-1$ is even. Given a cycle $\Cc$, we attach to $\Cc$ the binomial
\[
f_{\Cc} = \prod_{i=1}^{(m-1)/2} x_{a_{2i-1}} - \prod_{i=1}^{(m-1)/2} x_{a_{2i}}
\]

\begin{Theorem} \label{primitive}
Let $\Pc$ be a balanced  polyomino.
\begin{enumerate}
\item[{\em (a)}] Let $\Cc$ be a cycle in $\Pc$. Then $f_\Cc\in I_\Pc$.
\item[{\em (b)}] Let $f \in I_{\Pc}$ be a primitive binomial. Then there exists a cycle $\Cc$ in $\Pc$ such that each maximal interval of $\Pc$ contains at most two vertices of $\Cc$ and $f=\pm f_{\Cc}$.
\end{enumerate}
\end{Theorem}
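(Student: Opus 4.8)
The plan is to use the identification $I_\Pc=I_\Lambda$ from Proposition~\ref{ilambda}, together with the correspondence established in its proof between vectors $v\in\Lambda$ and admissible labelings $\alpha$ with $\alpha(a)=v_a$. For part~(a), I would attach to the cycle $\Cc=a_1,\dots,a_m$ the labeling $\alpha$ with $\alpha(a_i)=(-1)^{i+1}$ for $1\le i\le m-1$ and $\alpha(a)=0$ otherwise; this is well defined since $a_1,\dots,a_{m-1}$ are distinct, and by construction $f_\alpha=f_\Cc$. The crux is that $\alpha$ is \emph{admissible}. Fix a maximal horizontal edge interval $H$. By condition~(ii) exactly one of the two cycle edges at each cycle vertex is horizontal, and if such a vertex lies on $H$ then, $H$ being maximal, its unique horizontal cycle edge is contained in $H$; conversely both endpoints of a horizontal cycle edge meeting $H$ lie on $H$. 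Hence the cycle vertices on $H$ are partitioned into the endpoint pairs of the horizontal cycle edges inside $H$, and consecutive cycle vertices have opposite sign, so each pair contributes $0$ to $\sum_{a\in H}\alpha(a)$. The same holds for vertical intervals, so $\alpha$ is admissible, and since $\Pc$ is balanced, $f_\Cc=f_\alpha\in I_\Pc$.

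For part~(b), write the primitive binomial as $f=x^{v_+}-x^{v_-}$. Since $I_\Pc$ is a prime lattice ideal (Corollary~\ref{primep} and Lemma~\ref{lattice}), primitivity forces its two monomials to be coprime, and the standard description of lattice ideals gives $v:=v_+-v_-\in\Lambda$; thus $v$ is realized by an admissible labeling. I would then extract a cycle by an alternating walk: start at a vertex $a_1$ with $v_{a_1}>0$; having reached $a_t$, cross the maximal edge interval through $a_t$ perpendicular to the edge just used. Its $v$-sum is $0$ by admissibility, so (as $v_{a_t}\ne 0$) it carries a vertex $a_{t+1}$ of sign opposite to $v_{a_t}$, and $a_{t+1}\ne a_{t-1}$ because $a_{t-1}$ lies on the previous, perpendicular interval. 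The walk thus strictly alternates horizontal and vertical steps and alternates the sign of $v$. Stopping at the first repeated vertex $a_k=a_j$ and noting that equal signs force $k\equiv j\pmod 2$, the closed walk $a_j,\dots,a_k$ has even length and its first and last edges are of opposite type, so it is a genuine cycle $\Cc$. As its distinct vertices carry the signs they have in $v$, the monomials of $f_\Cc$ divide $x^{v_+}$ and $x^{v_-}$ respectively.

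Now $f_\Cc\in I_\Pc$ by part~(a) and divides $f$ in the sense just described, so primitivity forces $f=\pm f_\Cc$; hence every vertex in the support of $v$ lies on $\Cc$ exactly once, $v$ takes only the values $0,\pm1$, and $\Cc$ is the signed indicator of $v$. To bound the intersection with maximal intervals, suppose a maximal horizontal interval $H$ met $\Cc$ in at least three vertices; then, since $\sum_{a\in H}v_a=0$ with values in $\{0,\pm1\}$, it meets at least four, lying in two distinct horizontal cycle edges $\{a_s,a_{s+1}\}$ and $\{a_t,a_{t+1}\}$ with $s<t$ and $s,t$ odd. Then $[a_{s+1},a_t]$ is a horizontal edge interval inside $H$ joining vertices of opposite sign, and following the arc $a_{s+1},a_{s+2},\dots,a_t$ by this chord yields a strictly shorter cycle $\Cc'$ (it omits $a_s$); the alternation closes up because the chord is horizontal while the arc meets $a_{s+1}$ and $a_t$ through vertical cycle edges. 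By part~(a), $f_{\Cc'}\in I_\Pc$, and it properly divides $f$, contradicting primitivity. Hence at most two vertices of $\Cc$ lie on any maximal interval.

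The main obstacle I anticipate is the combinatorial bookkeeping in these two surgeries: checking that the first-return walk closes into a cycle satisfying the alternation condition~(ii), and that the chord genuinely produces a proper sub-cycle whose binomial divides $f$. Everything else reduces to the admissibility computation of part~(a) and the prime–lattice-ideal dictionary supplied by Proposition~\ref{ilambda} and Corollary~\ref{primep}.
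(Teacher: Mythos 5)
Your proof is correct and follows essentially the same route as the paper's: part (a) via the $\pm 1$ labeling supported on the cycle together with the interval-sum admissibility check, and part (b) via the alternating walk through maximal horizontal and vertical edge intervals, closing at the first repeated vertex and invoking primitivity to conclude $f=\pm f_\Cc$. Your chord-shortening argument for the clause that each maximal interval meets $\Cc$ in at most two vertices is a genuine addition --- the paper's proof never verifies that clause --- and it is sound, since two horizontal cycle edges of $\Cc$ inside one maximal interval would produce a strictly shorter cycle whose binomial properly divides $f$.
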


\begin{proof}
(a) Let $\Cc= a_1,a_2, \ldots, a_m$ be the cycle in $\Pc$. We define the  labeling $\alpha$ of $\Pc$ by setting $\alpha(a)=0$ if $a\not\in \Cc$  and $\alpha(a_i)=(-1)^{i+1}$ for $i=1,\ldots,m$, and claim that $\alpha$ is an admissible labeling of $\Pc$. To see this we consider a maximal horizontal edge interval  $I$ of $\Pc$. If $I\sect \Cc=\emptyset$, then $\alpha(a)=0$ for all $a\in I$. On the other hand, if $I\sect\Cc\neq \emptyset$, then there exist integers $i$ such that $a_i,a_{i+1} \in I$ (where $a_{i+1}=a_1$ if $i=m-1$), and no other vertex of $I$ belongs $\Cc$. It follows that $\sum_{a\in I}\alpha(a)=0$. Similarly, we see that $\sum_{a\in I}\alpha(a)=0$ for any vertical edge interval. It follows form the definition of $\alpha$ that $f_\Cc=f_{\alpha}$, and hence since $\Pc$ is balanced it follows that $f_\Cc\in I_\Pc$.

(b) Let $f \in I_\Pc$ be a primitive binomial. Since $\Pc$ is balanced and $f$ is irreducible, \cite[Theorem 3.8(a)]{Q} implies that there exists an admissible labeling $\alpha$ of $\Pc$ such that
 \[
f=f_{\alpha} = \prod_{a \in V(\Pc) \atop \alpha(a) > 0} x_a^{\alpha(a)} - \prod_{a \in V(\Pc) \atop \alpha(a) < 0} x_a^{-\alpha(a)}.
 \]
Choose $a_1 \in V(\Pc)$ such that $\alpha (a_1) >0$. Let $I_1$ be the maximal horizontal edge interval with $a_1 \in I_1$. Since  $\alpha$ is admissible, there exists some $a_2 \in I_1$ with $\alpha(a_2) < 0$. Let $I_2$ be the maximal vertical edge interval containing $a_2$. Then  similarly as before, there exists  $a_3 \in I_2$ with $\alpha(a_3)>0$.  In the next step we consider the maximal horizontal edge interval containing and $a_3$ and proceed as before.  Continuing in this way we obtain a sequence $a_1,a_2,a_3.\ldots,$ of vertices of $\Pc$  such that  $\alpha(a_1), \alpha(a_2), \alpha(a_3),\ldots$ is a sequence with alternating signs.  Since $V(\Pc)$ is a finite set, there exist a number $m$ such that $a_i\neq a_j$ for all $1\leq i<j\leq m$  and $a_m= a_i$ for some $i<m$. If follows  that $\alpha(a_m)=\alpha(a_i)$ which implies that $m-i$ is even. Then the sequence $\Cc=a_i,a_{i+1},\ldots a_m$ is a cycle in $\Pc$, and hence by (a), $f_\Cc\in I_\Pc$.

For any binomial $g=u-v$ we set  $g^{(+)}=u$ and $g^{(-)}=v$. Now
if $i$ is odd, then $f_\Cc^{(+)}$ divides $f^{(+)}$ and  $f_\Cc^{(-)}$ divides $f^{(-)}$, while if $i$ is even, then   $f_\Cc^{(+)}$ divides $f^{(-)}$ and $f_\Cc^{(-)}$ divides $f^{(+)}$. Since $f$ is primitive, this implies that $f=\pm f_{\Cc}$, as desired.
\end{proof}

\begin{Corollary}
Let $\Pc$ be a balanced polyomino. Then $I_{\Pc}$ admits a squarefree initial ideal for any monomial order.
\end{Corollary}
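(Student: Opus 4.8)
The plan is to deduce the statement from Theorem~\ref{primitive} together with the standard structure theory of lattice ideals. Since $\Pc$ is balanced, Proposition~\ref{ilambda} gives $I_\Pc=I_\Lambda$, so $I_\Pc$ is a lattice ideal, and by Corollary~\ref{primep} it is prime. Fix an arbitrary monomial order $<$ and let $\Gc$ be the reduced Gr\"obner basis of $I_\Pc$ with respect to $<$. As $I_\Pc$ is a binomial ideal, every element of $\Gc$ is a binomial; I would cite the relevant result on binomial ideals (\cite{ES} or \cite{MS}).

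The key input is that for a lattice ideal the universal Gr\"obner basis --- the union of the reduced Gr\"obner bases over all monomial orders --- is contained in the set of primitive binomials (the Graver basis). I would invoke this from the theory of lattice ideals, e.g.\ \cite{MS}. Consequently each $g\in\Gc$ is a primitive binomial of $I_\Pc$.

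Now I apply Theorem~\ref{primitive}(b): every such $g$ equals $\pm f_\Cc$ for a suitable cycle $\Cc=a_1,\ldots,a_m$ in $\Pc$. By definition $f_\Cc=\prod_{i=1}^{(m-1)/2}x_{a_{2i-1}}-\prod_{i=1}^{(m-1)/2}x_{a_{2i}}$, and because the vertices $a_1,\ldots,a_{m-1}$ are pairwise distinct, both monomials occurring in $f_\Cc$ are squarefree. Hence $\ini_<(g)$, being one of these two monomials, is squarefree. Since $\ini_<(I_\Pc)$ is generated by $\{\ini_<(g):g\in\Gc\}$, it is a squarefree monomial ideal, as claimed.

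The mathematical substance is already contained in Theorem~\ref{primitive}; what remains is the bookkeeping that every reduced Gr\"obner basis element of a lattice ideal is primitive. I expect the only point requiring care to be the clean invocation of the universal-Gr\"obner-basis/Graver-basis inclusion, and the observation that squarefreeness of $f_\Cc$ is automatic from the distinctness of the cycle vertices (the sharper ``at most two vertices per maximal interval'' clause of Theorem~\ref{primitive}(b) is not even needed here).
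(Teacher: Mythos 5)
Your proposal is correct and follows essentially the same route as the paper: both reduce to the fact that for a toric (equivalently, prime lattice) ideal the primitive binomials form a universal Gr\"obner basis, and then invoke Theorem~\ref{primitive}(b) to see that every primitive binomial is $\pm f_\Cc$ for a cycle $\Cc$, whose two monomials are squarefree since the cycle vertices $a_1,\ldots,a_{m-1}$ are distinct. The only cosmetic difference is that you phrase the key input as ``universal Gr\"obner basis $\subseteq$ Graver basis'' while the paper phrases it as ``the primitive binomials form a universal Gr\"obner basis''; these are the same fact.
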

\begin{proof}
By Corollary~\ref{primep},  $I_\Pc$ is a prime ideal, since  $\Pc$  is a balanced polyomino. This implies that $I_\Pc$ is a toric ideal, see for example \cite[Theorem 5.5]{EH}.  Now we use the fact (see \cite[Lemma 4.6]{St} or \cite[Corollary 10.1.5]{HHBook}) that the primitive binomials of a toric ideal form a universal Gr\"obner basis. Since by Theorem~\ref{primitive}, the primitive binomials of $I_\Pc$  have squarefree initial terms for any monomial order, the desired conclusion follows.
\end{proof}

\begin{Corollary}
\label{balancedcm}
Let $\Pc$ be a balanced  polyomino. Then $K[\Pc]$ is a normal Cohen-Macaulay domain of dimension $|V(\Pc)|-|\Pc|$.
\end{Corollary}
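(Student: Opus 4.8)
The plan is to treat the three ring-theoretic properties separately and read off the dimension from the height computation already available; two of the assertions are essentially formal, so the real content is the normality. That $K[\Pc]$ is a domain is immediate: by Corollary~\ref{primep} the ideal $I_\Pc$ is prime, hence $K[\Pc]=S/I_\Pc$ is a domain. For the dimension I would combine this with the height statement of the same corollary. Since $S$ is a polynomial ring in the $|V(\Pc)|$ variables $x_{ij}$, one has $\dim S=|V(\Pc)|$, and as $I_\Pc$ is prime of height $|\Pc|$ the dimension formula for a quotient of a polynomial ring by a prime gives $\dim K[\Pc]=\dim S-\height I_\Pc=|V(\Pc)|-|\Pc|$.

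The heart of the matter is normality, and here I would exploit the squarefree initial ideal furnished by the preceding corollary (which itself rests on Theorem~\ref{primitive}). As $I_\Pc$ is prime it is a toric ideal, so $K[\Pc]$ is realized as the affine semigroup ring of the point configuration encoding $\Lambda$. The existence of a term order for which $\ini(I_\Pc)$ is squarefree translates, through Sturmfels' correspondence between initial ideals and regular triangulations (\cite[Chapter 8]{St}), into the existence of a regular \emph{unimodular} triangulation of that configuration; and any lattice configuration admitting a unimodular triangulation has a normal associated semigroup ring. This yields the normality of $K[\Pc]$.

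Cohen--Macaulayness then follows immediately from Hochster's theorem that every normal affine semigroup ring is Cohen--Macaulay. The main obstacle is thus concentrated entirely in the normality step: once the squarefree initial ideal is reinterpreted as a regular unimodular triangulation, both normality and the Cohen--Macaulay property follow from standard structural theorems, while the domain property and the dimension have already been settled. In short, the single nontrivial input is the squarefree initial ideal of Theorem~\ref{primitive}, which via the triangulation dictionary is precisely what forces $K[\Pc]$ to be normal.
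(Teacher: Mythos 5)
Your proposal is correct and follows essentially the same route as the paper: normality from the squarefree initial ideal via Sturmfels' theorem (which is exactly the regular unimodular triangulation argument you spell out), Cohen--Macaulayness from Hochster's theorem, and the dimension from $\height I_\Pc=\rank_\ZZ\Lambda=|\Pc|$. The only cosmetic difference is that you quote the height directly from Corollary~\ref{primep} while the paper re-derives it from \cite[Proposition 7.5]{MS}, which is the same fact.
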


\begin{proof}
A toric ring whose toric ideal admits a squarefree initial ideal is normal by theorem of Sturmfels \cite[Chapter 8]{St}, and by a theorem of Hochster (\cite[Theorem 6.3.5]{BH}) a normal toric ring is Cohen--Macaulay. Since $\Pc$ is balanced, we know from Proposition~\ref{ilambda} that $I_\Pc=I_\Lambda$ where $\Lambda$ is the lattice in $\Dirsum_{(i,j)\in V(\Pc)}\ZZ e_{ij}$ spanned by the elements $b_C=e_{ij}+e_{i+1,j+1}- e_{i+1,j}-e_{i,j+1}$ where $C=[(i,j),(i+1,j+1)]$ is a cell of $\Pc$. By \cite[Proposition 7.5]{MS}, the height of $I_\Lambda$ is equal to the rank of $\Lambda$. Thus we see that $\height I_\Pc=|\Pc|$. It follows that
the Krull dimension of $K[\Pc]$ is equal to $|V(\Pc)|-|\Pc|$, as desired.
\end{proof}

\section{Classes of balanced  polyominoes}
\label{classes}
In this section we consider two classes of balanced polyominoes. As mentioned in Section~\ref{innerminors} one expects  that any simple polyomino is balanced. In this generality we do not yet have a proof of this statement. Here we want to consider only two special classes of polyominoes  which are simple and balanced, namely the  row and column convex polyominoes, and the tree-like polyominoes.

Let $\Pc$ be a  polyomino. Let $C=[(i,j),(i+1,j+1)]$ be a cell of $\Pc$. We call  the vertex $a=(i,j)$ the {\em left lower corner} of $C$. Let  $C_1$ and $C_2$ be two cells with left lower corners $(i_1,j_1)$ and $(i_2,j_2)$, respectively.  We say that $C_1$ and $C_2$  are  in {\em horizontal (vertical) position} if $j_1=j_2$ ($i_1=i_2$). The polyomino $\Pc$ is called {\em row convex} if for any two horizontal cells $C_1$ and $C_2$  with lower left corners $(i_1,j)$ and $(i_2,j)$ and $i_1<i_2$, all cells with lower left corner $(i,j)$ with $i_1\leq i\leq i_2$ belong to $\Pc$. Similarly one defines {\em column convex} polyominoes. For example, the polyomino displayed in Figure~\ref{figcolumnconvex} is column convex but not row convex.

        \begin{figure}[htbp]
        \begin{center}
          \includegraphics[clip, width=2cm]{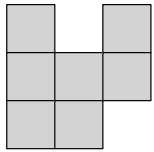}
    \caption{Column convex but not row convex}
     \label{figcolumnconvex}
     \end{center}
     \end{figure}

A {\em neighbor} of a cell $C$ in $\Pc$ is a cell $D$ which shares a common edge with $C$. Obviously, any cell can have at most four neighbors. We call a cell of $\Pc$ a {\em leaf}, if it has an edge which does not has a  common vertex with any other cell.  Figure~\ref{leaves}  illustrates this concept.

       \begin{figure}[htbp]
        \begin{center}
          \includegraphics[clip, width=4.3cm]{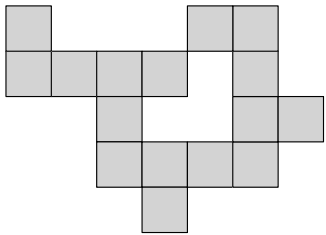}
  \caption{A polyomino with three leaves}\label{leaves}
     \end{center}
     \end{figure}

The polyomino $\Pc$ is called {\em tree-like}  each subpolyomino of $\Pc$  has a leaf. The polyomino displayed in Figure~\ref{leaves} is not tree-like, because it contains a subpolyomino which has no leaf. On the other hand, Figure~\ref{treelike} shows a tree-like polyomino.

\begin{figure}[htbp]
        \begin{center}
          \includegraphics[clip, width=4.3cm]{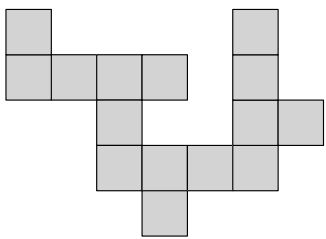}
  \caption{A tree-like polyomino}\label{treelike}
     \end{center}
     \end{figure}

A free vertex of $\Pc$ is a vertex which belongs to exactly one cell. Notice that any leaf has two free vertices.

We call a path of cells a {\em  horizontal (vertical) cell interval}, if the left lower corners of the path form a horizontal (vertical) edge  interval.
Let $C$ be a leaf, and let $\Ic$ by the maximal cell interval to which $C$ belongs, and assume that   $\Ic$ is a horizontal (vertical) cell interval. Then we call $C$ a {\em good leaf}, if for one of the free vertices of $C$ the maximal horizontal (vertical) edge interval which contains it  has the same length as $\Ic$. We call a leaf  {\em bad} if it is not good, see Figure~\ref{badcells}.


\begin{Theorem}
\label{whatweunderstand}
Let $\Pc$ be a row or column convex, or a tree-like polyomino. Then $\Pc$ is balanced and simple.
 \begin{figure}[htbp]
\begin{center}
\includegraphics[width =2cm]{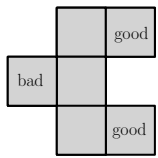}
\caption{Bad and good leaves}\label{badcells}
\end{center}
\end{figure}

\end{Theorem}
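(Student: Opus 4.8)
The plan is to establish \emph{simple} and \emph{balanced} separately, and to split the balanced part into a convex case and a tree-like case. Simpleness is the quickest. If $\Pc$ is row convex and $C$ is a cell with lower left corner $(i,j)$ outside $\Pc$, then all cells of $\Pc$ in row $j$ must lie entirely to one side of the column $i$; otherwise $C$ would sit between two cells of that row and row convexity would force $C\in\Pc$. Hence $C$ is joined to a cell outside the bounding interval $[a,b]$ by a horizontal path of cells avoiding $\Pc$, so $\Pc$ is simple, and the column convex case is identical. If a tree-like $\Pc$ were not simple it would have a hole, and the cells of $\Pc$ bordering a minimal hole form a closed ring: a subpolyomino in which every cell has two neighbours along the ring and hence no free edge, so it has no leaf. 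This contradicts tree-likeness, so tree-like polyominoes are simple too.

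For \emph{balanced} it suffices, as $I_\Pc\subseteq J_\Pc$ always, to prove $f_\alpha\in I_\Pc$ for each admissible labeling $\alpha$, and I would induct on $|\Pc|$, peeling off a piece of $\Pc$ while lowering $\alpha$ by the cell relations $\alpha_C$ one at a time. Each single move replaces a labeling $\beta$ by $\beta-z\,\alpha_C$, and since $f_{z\alpha_C}\in(f_{\alpha_C})\subseteq I_\Pc$, the elementary identity $x^{u_++v_+}-x^{u_-+v_-}=x^{v_+}f_u+x^{u_-}f_v$ (applied with $u=\beta-z\alpha_C$, $v=z\alpha_C$, where $u_\pm$ denote the positive and negative parts of $u$) gives $x^{c}f_\beta\in(f_{\beta-z\alpha_C})+I_\Pc$ for some monomial $x^{c}$. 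Chaining these moves until the labeling is supported on a strictly smaller polyomino $\Pc'$ and invoking the inductive hypothesis $f_{\alpha'}\in I_{\Pc'}\subseteq I_\Pc$, one obtains $x^{c}f_\alpha\in I_\Pc$, and hence $f_\alpha\in I_\Pc$ once the monomial factor is removed.

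In the tree-like case the piece removed is a single \emph{good leaf} $C=[(i,j),(i+1,j+1)]$: admissibility on its free edge forces the two free values of $\alpha$ there to be $t$ and $-t$, and $\alpha'=\alpha-t\alpha_C$ then kills $\alpha$ on both free vertices. The defining property of a good leaf—that a free vertex has a maximal edge interval of the same length as the cell interval $\Ic$ it caps—is exactly what makes the affected edge intervals shorten by one upon deletion, so that $\alpha'$ remains admissible on $\Pc\setminus\{C\}$; moreover $\Pc\setminus\{C\}$ is again tree-like, since each of its subpolyominoes is a subpolyomino of $\Pc$ and hence has a leaf. A row (or column) convex polyomino may have no leaf at all (a solid rectangle has none), so here I would instead peel off the whole top row, with lower left corners $(s,t),\dots,(e,t)$. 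One solves the telescoping system $z_{x-1}-z_x=\alpha(x,t+1)$ for $s\le x\le e+1$ (with $z_{s-1}=z_{e+1}=0$), which is consistent precisely because admissibility of the top horizontal edge interval gives $\sum_{x=s}^{e+1}\alpha(x,t+1)=0$; subtracting $\sum_x z_x\alpha_{C_x}$ clears all labels at height $t+1$, while on an exposed bottom vertex $(x,t)$ having no cell below one finds $\alpha'(x,t)=\alpha(x,t)+\alpha(x,t+1)=0$ from the length-one vertical edge interval there. The reduced polyomino is still row convex (if it disconnects, one argues on each component), and the base case is a single row, whose ideal is the ideal of $2$-minors of a $2\times n$ matrix of indeterminates and is classically balanced.

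The principal obstacle I anticipate is combinatorial: in the tree-like case one must show that a tree-like polyomino always possesses a \emph{good} leaf, not merely a leaf, since bad leaves genuinely occur. I expect this to follow by choosing an extremal leaf—say one capping an outermost row or column—and checking that one of its free vertices has an edge interval of the matching length. A secondary, technical obstacle is the passage from $x^{c}f_\alpha\in I_\Pc$ to $f_\alpha\in I_\Pc$ when $c\neq 0$, which needs $I_\Pc$ to be saturated in the relevant variables; this can be arranged by exploiting that the free vertices of a good leaf occur in no inner minor other than $f_C$, so the reduction can be organised to keep $c=0$, or alternatively by recasting the whole induction as the two statements that every admissible labeling lies in $\Lambda$ (so that $J_\Pc=I_\Lambda$) and that $I_\Pc=I_\Lambda$, which together yield $I_\Pc=J_\Pc$ in the spirit of Proposition~\ref{ilambda}.
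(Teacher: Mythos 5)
Your architecture matches the paper's (peel a good leaf in the tree-like case, peel the outermost column or row in the convex case, and reduce $\alpha$ step by step using binomials of $I_\Pc$), but the two steps you flag as ``anticipated obstacles'' are precisely the substance of the proof, and your proposed ways around them do not work. First, the existence of a \emph{good} leaf. Your heuristic of taking an extremal leaf fails: take a vertical bar of three cells, attach one cell $C$ to the left of the middle cell, and attach one cell to the right of each of the top and bottom cells. Then $C$ is the leftmost cell and a leaf, its maximal cell interval $[C,D_C]$ has only two cells, but each of its two free vertices lies on a maximal horizontal edge interval spanning three unit edges; so this extremal leaf is bad. The paper establishes $g(\Pc)\geq 2$ by a genuine counting argument: the connection graph of a tree-like polyomino is a tree, so $n_1+2n_2+3n_3+4n_4=2(|\Pc|-1)$ gives $n_1=n_3+2n_4+2$, and one constructs an injection from bad leaves $C$ to degree-three cells $D_C$ (tree-likeness is used to show the two cells flanking the far end of the cell interval must belong to $\Pc$), whence $b(\Pc)\leq n_3$ and $g(\Pc)\geq 2n_4+2$. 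Without some argument of this kind your induction cannot start.

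Second, the cancellation problem. Your telescoping identity ends with $x^{c}f_\alpha\in I_\Pc$, with the spurious monomial attached to the binomial you are trying to capture; since primeness of $I_\Pc$ is a \emph{consequence} of balancedness, you may not cancel it, and neither of your escapes is available: the free vertices of a good leaf do occur in inner minors other than the $2$-minor of $C$ (namely in the minors of all inner intervals inside $[C,D_C]$, which has at least two cells), and recasting via $I_\Lambda$ merely restates what is to be proved. The paper's reduction is engineered so that no cancellation is ever needed. Because $C$ is a \emph{good} leaf, the maximal edge interval $[a_2,b]$ through the free vertex $a_2$ has the same length as $[C,D_C]$, so admissibility supplies $c\in[a_2,b]$ with $\alpha(c)>0$ and an inner minor $g=x_cx_{a_1}-x_dx_{a_2}$ whose first term divides $f_\alpha^{(+)}$; the identity $f_\alpha-(f_\alpha^{(+)}/x_cx_{a_1})\,g=(x_{a_2}x_d)f_\beta$ places the monomial on the side of the inductively known binomial $f_\beta$, and one inducts on $|\alpha(a_1)|$ rather than clearing a whole row at once. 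Your row-clearing scheme for the convex case inherits the same defect at every step; the paper instead reduces $|\alpha(a)|$ at the lower-left corner of the leftmost column using a single well-chosen inner minor, exactly as in the tree-like case. (Your simpleness arguments are essentially in order, though the claim that the cells around a minimal hole form a leafless subpolyomino needs more justification than you give; the paper proceeds by deleting a leaf and rerouting the escape path around it.)
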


\begin{proof}

Let $\Pc$ be a tree-like polyomino. We first show that $\Pc$ is balanced. Let $\alpha$ be an admissible labeling of $\Pc$. We have to show that $f_\alpha\in I_\Pc$. To prove this we first show that $\Pc$ has a good leaf.

If $|\Pc| = 1$, then the assertion is trivial. We may assume that $|\Pc| \ge 2$. Let
\[
n_i = |\{C \in \Pc : \deg C=i \}|.
\]
Observe that $\sum_{C \in \Pc} \deg C  = n_1 + 2n_2 + 3n_3 + 4n_4$, where $\deg C$ denotes the number of neighbor cells of $\Pc$.

Let $\Pc$ be any polyomino with cells $C_1, \ldots, C_n$. Associated to $\Pc$, is the so-called {\em connection graph} on vertex $[n]$ with the edge set
$ \{\{i,j\}: E(C_i) \cap E(C_j) \neq \emptyset \}$.

It is easy to see that connection graph  of a tree-like polyomino is a tree. Therefore, using some elementary facts from graph theory, we obtain that

\[
n_1 + 2n_2 + 3n_3 + 4n_4 = 2 (|\Pc| -1) = 2(n_1 + n_2 + n_3 + n_4-1).
\]

This  implies that  $n_1 = n_3+2n_4+2$.
Let $g(\Pc)$ be the number of good leaves in $\Pc$ and $b(\Pc)$ be the number of bad leaves in $\Pc$.
It is obvious that $n_1= g(\Pc) + b(\Pc)$.
Then we have
\begin{eqnarray}
\label{shikama}
g(\Pc) = n_3+ 2n_4 +2 -b(\Pc).
\end{eqnarray}
Next, we show that $b(\Pc) \leq n_3$. Suppose that $C$ is a  bad leaf in $\Pc$ and $\Ic$ the unique maximal cell interval to which $C$ belongs.
Let $D_C$ be the end cell of the interval $\Ic$. Observe that $C\neq D_C$.
We claim that $\deg D_C = 3$.  Indeed, since $C$ is bad,  the length of the maximal intervals containing the two free vertices of $C$  is bigger than the length of the interval $\Ic$.  See Figure \ref{Thm1} where the cells belonging to $\Ic$ are marked with dots and $E$ is the  cell next to  $D_C$ which does not belong to $\Pc$. Since $E \notin\Pc$,  the cells $D_1$ and $D_2$ belong to $\Pc$. Suppose $D_3\not\in \Pc$. Since $\Pc$ is a polyomino there exists a  path $\Cc$ connecting $D_1$ and $D_C$. Since $E,D_3\not\in \Pc$ the path $\Cc$ (which is a subpolyomino of $\Pc$)  does not have a leaf, contradicting the assumption that $\Pc$ is tree-like. Therefore, $D_3\in\Pc$. Similarly one shows that $D_4\in \Pc$. This shows that $\deg D_C=3$.

 \begin{figure}[htbp]
\begin{center}
\includegraphics[width =2.8cm]{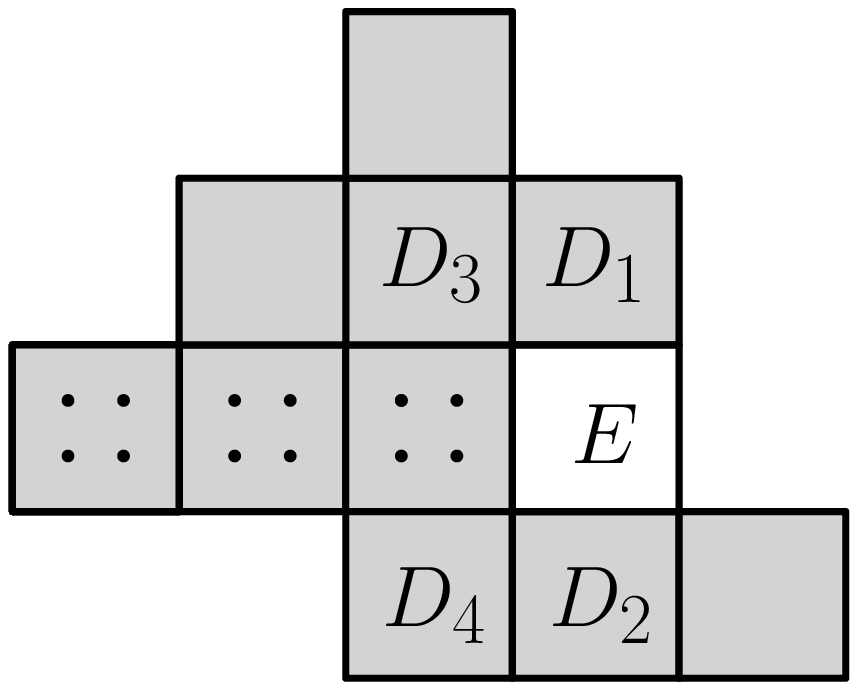}
\caption{}\label{Thm1}
\end{center}
\end{figure}

Moreover, $D_C$ cannot be the end cell of any other cell interval, because $D_3$ and $D_4$ are neighbors of $D_C$. Thus we obtain a one-to-one correspondence between the bad leafs $C$ and the  cells $D_C$ as defined before. It follows that $n_3\leq b(\Pc)$. Therefore, by (\ref{shikama}) we obtain
\[
g(\Pc)=n_3+ 2n_4 +2 -b(\Pc)\geq 2n_4+2\geq 2.
\]
Thus, we have at least $2$ good end cell for every tree-like polyomino.

Now we show $\Pc$ is balanced. Indeed, let $\alpha$ be an admissible labeling of $\Pc$. We want to show that $f_\alpha\in I_\Pc$.  Let $C$ be a good leaf of $\Pc$ with free vertices $a_1$ and $a_2$. Then $\alpha(a_1)=-\alpha(a_2)$. If $\alpha(a_i)=0$ for $i=1,2$, then $\alpha$ restricted to $\Pc'$ is an admissible labeling of  $\Pc'$, where $\Pc'$ is obtained from $\Pc$ by removing the cell $C$ from $\Pc$. Inducting on  the number of cells we may then assume that $f_\alpha\in I_{\Pc'}$. Since $I_{\Pc'}\subset I_\Pc$, we are done is this case.

Assume now that $\alpha(a_1)\neq 0$. We proceed by induction on $|\alpha(a_1)|$. Note that  $\alpha(a_2)\neq 0$, too. Since $C$ is a good leaf, we may assume that  the maximal interval $[a_2,b]$ to which $a_2$ belongs has the same length as the cell interval $[C,D_C]$ and $\alpha(a_1)>0$. Then $\alpha(a_2)<0$ and hence, since $\alpha$ is admissible,  there exists a $c\in [a_2,b]$  with  $\alpha(c)>0$. The cells of the interval $[c,a_1]$ all belong to $[C,D_C]$. Therefore, $g=x_cx_{a_1}-x_dx_{a_2}\in I_\Pc$. Here $d$ is the vertex  as indicated in Figure~\ref{Thm2}.

 \begin{figure}[htbp]
\begin{center}
\includegraphics[width =3.3cm]{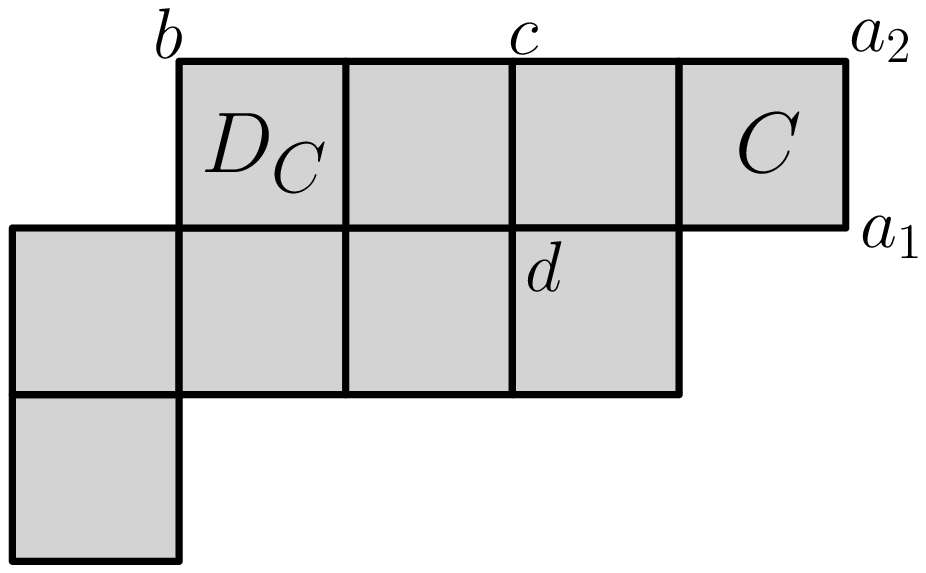}
\caption{}\label{Thm2}
\end{center}
\end{figure}

Notice that the labeling $\beta$ of $\Pc$ defined by
\[
\beta_1(e)= \left\{ \begin{array}{ll}
          \alpha(e)-1, &  \text{if  $e=a_1$ or $e=c$}, \\
         \alpha(e)+1, &\text{if  $e=a_2$ or $e=d$}, \\
           \alpha(e),& \text{elsewhere}.
        \end{array} \right.
\]
is admissible and $|\beta(a_1)| < |\alpha( a_1)|$. By induction hypothesis we have that $f_\beta\in I_\Pc$.
Then the following relation
\begin{eqnarray}\label{relation}
f_{\alpha} -( f_{\alpha}^{(+)}/ x_c x_{a_1}) g = (x_{a_2}x_d)f_{\beta}
\end{eqnarray}
gives that $f_{\alpha} \in I_{\Pc}$, as well.


Next, we show that $\Pc$ is simple by applying induction on number of cells. The assertion is trivial if $\Pc$ consists of only one cell.
Suppose now  $|\Pc|>1$,  and let $D$ be a cell not belonging to $\Pc$ and $\Ic$ an interval such  $V(\Pc)\subset \Ic$.

Since  $\Pc$ is tree-like, $\Pc$ admits a leaf cell $C$. Let $\Pc'$ be the polyomino which is obtained from $\Pc$ by removing the cell $C$. Then $\Pc'$ is again tree-like and hence is simple by induction. Therefore,  since $D\not\in\Pc'$, there exists  a path $\Dc': D=D_1,D_2,\cdots, D_m$ of cells with $D_i\not\in \Pc'$ for all $i$ and such that $D_m$ is a border cell of $\Ic$. We let $\Dc=\Dc'$, if $D_i\neq C$ for all $i$. Note that $C\neq D_1, D_m$. Suppose $D_i=C$ for some $i$ with $1<i<m$. Since $C$ a leaf, it follows that the cells $C_1,C_2,C_3,C_4$ and $C_5$ as shown in Figure~\ref{dom} do not belong to $\Pc$.

\begin{figure}[htbp]
\begin{center}
\includegraphics[width =2.3cm]{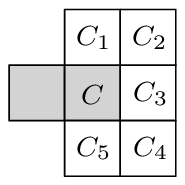}
\caption{}\label{dom}
\end{center}
\end{figure}

Since $D_i=C$ and since $\Dc'$ is a path of cells it follows that $D_{i-1}\in \{C_1,C_3,C_5\}$ and $D_{i+1}\in \{C_1,C_3,C_5\}$. If $D_{i-1}=C_1$ and $D_{i+1}=C_3$, then we let
\[
\Dc: D_1,\ldots, D_{i-1},C_2,D_{i+1},\ldots, D_m,
\]
and if $D_{i-1}=C_1$ and $D_{i+1}=C_5$, then we let
\[\Dc: D_1,\ldots, D_{i-1},C_2,C_3,C_4,D_{i+1},\ldots, D_m.
 \]
Similarly one defines  $\Dc$ in all the other cases that my occur for $D_{i-1}$ and $D_{i+1}$ , so that in any case $\Dc'$ can be replaced by the path of cells $\Dc$ which does not contain any cell of $Pc$ and connects $D$ with the border of $\Ic$. This shows that $\Pc$ is simple.

\medskip
Now we show that if $\Pc$ is row or column convex then $\Pc$ is balanced and simple. We may assume that $\Pc$ be column convex. We first show that $\Pc$ is balanced. For this part of the proof we follow the arguments given in \cite[Proof of Theorem 3.10]{Q}.  Let $\Cc_1=[C_1, C_n]$ be the left most column interval of $\Pc$ and $a$ be lower left corner of $C_1$. Let $\alpha $ be an admissible labeling for $\Pc$. If $\alpha (a) =0$, then $\alpha$ restricted to $\Pc'$ is an admissible labeling of  $\Pc'$, where $\Pc'$ is obtained from $\Pc$ by removing the cell $C_1$ from $\Pc$. By applying induction on  the number of cells we have that $f_\alpha\in I_{\Pc'} \subset I_\Pc$, and we are done is this case.

Now we may assume that $\alpha(a) \neq 0$. We may assume that $\alpha (a) > 0$. By following the same arguments as in case of tree-like polyominoes, it suffice to show that there exist an inner interval $[(i,j), (k,l)]$ with $i<k$ and $j<l$ such that $\alpha((i,j))\alpha((k,l)) >0$ or $\alpha((k,j))\alpha((i,l)) >0$.

Let $[a,b]$ and $[a,c]$ be the maximal horizontal and vertical edge intervals of $\Pc$ containing $a$. Then there exist $e \in [a, b]$ and $f \in  [a, c]$ such that $\alpha(e), \alpha(f) < 0$ because $\alpha$ is admissible. Let $[f, g]$ be the maximal horizontal edge interval of $\Pc$ which contains $f$. Then there exists a vertex $h \in [f, g]$ such that $\alpha(h) > 0$. If $\size([f, g])\leq  \size([a, b ])$, then column convexity of $\Pc$ gives that $ [a, h]$ is an inner interval of P and $\alpha(a)\alpha(h)>0$. Otherwise, if  $\size([f,g]) \geq  \size([a,b])$, then again by using column convexity of $\Pc$, $x_a x_q - x_e x_f \in I_{\Pc}$ where $q$ is as shown in following figure. By following the same arguments as in case of tree-like polyominoes, we conclude that $f_\alpha \in I_\Pc$.

Now we will show that $\Pc$ is simple. We may assume that $V(\Pc)\subset [(k,l),(r,s)]$  with $l>0$. Let $C\not\in \Pc$ be a cell with with lower left corner $a=(i,j)$, and consider the infinite path $\Cc$ of cells where $\Cc=C_0,C_1,\ldots $ and where the lower left corner of $C_k$ is  $(i,k)$ for $k=0,1,\ldots$. If $\Cc\sect\Pc=\emptyset$, then $C$ is connected to $C_0$. On the other hand, if $\Cc\sect\Pc\neq \emptyset$, then, since $\Pc$ is column convex,  there exist integers $k_1,k_2$ with $l\leq k_1\leq k_2\leq s$ such that $C_k\in \Pc$ if and only if $k_1\leq k\leq k_2$. Since $C\not\in \Pc$ it follows that $j<k_1$ or $j>k_2$. In the first case, $C$ is connected to $C_0$ and in the second case $C$ is connected to $C_s$.
\end{proof}



\begin{Corollary}
Let $\Pc$ be a row or column convex, or a tree-like polyomino. Then $K[\Pc]$ is a normal Cohen--Macaulay  domain.
\end{Corollary}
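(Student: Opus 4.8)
The plan is to observe that this statement is a formal consequence of the two main results already established, so essentially no new argument is needed; the work is simply to combine them in the correct order.

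First I would invoke Theorem~\ref{whatweunderstand}, which asserts precisely that every row or column convex polyomino and every tree-like polyomino is balanced (and simple). Thus the hypothesis on $\Pc$ guarantees that $\Pc$ is a balanced polyomino, which is exactly the hypothesis required to apply the general structural result.

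Next I would apply Corollary~\ref{balancedcm}, which states that for any balanced polyomino $\Pc$ the ring $K[\Pc]$ is a normal Cohen--Macaulay domain (of dimension $|V(\Pc)|-|\Pc|$). Since $\Pc$ is balanced by the previous step, this immediately yields the desired conclusion. In outline, the chain is: row/column convex or tree-like $\Rightarrow$ balanced (Theorem~\ref{whatweunderstand}) $\Rightarrow$ $K[\Pc]$ normal Cohen--Macaulay domain (Corollary~\ref{balancedcm}).

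I do not anticipate any genuine obstacle here, since all the substantive content--primeness via Proposition~\ref{ilambda} and Corollary~\ref{primep}, the squarefree initial ideal via Theorem~\ref{primitive}, and normality together with the Hochster Cohen--Macaulayness argument used to prove Corollary~\ref{balancedcm}--has been carried out earlier. The only thing to be careful about is to quote Theorem~\ref{whatweunderstand} for the balancedness (rather than for simpleness, which is not what feeds into Corollary~\ref{balancedcm}); simpleness is relevant to Qureshi's conjecture but is not used in establishing the ring-theoretic properties. Hence the proof reduces to a single sentence chaining the two cited results.
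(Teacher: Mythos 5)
Your proposal is correct and is exactly the argument the paper intends (the corollary is stated without proof precisely because it follows immediately by chaining Theorem~\ref{whatweunderstand}, which gives balancedness, with Corollary~\ref{balancedcm}). Your remark that it is the balanced property, not simplicity, that feeds into Corollary~\ref{balancedcm} is also accurate.
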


{}
 \end{document}